\title[The Lind-Lehmer Constant for $\mathbb Z_2^r \times \mathbb Z_{4}^s$]{The Lind-Lehmer Constant for $\mathbb Z_2^r \times \mathbb Z_{4}^s$ }
\author{Michael J. Mossinghoff}\thanks{This work was supported in part by a grant from the Simons Foundation (\#426694 to M.~J. Mossinghoff).}
\address{Department of Mathematics \&  Computer Science \\Davidson College\\  Davidson, NC 28035-6996, USA}
\email{mimossinghoff@davidson.edu}
\author{Vincent Pigno}
\address{ Department of Mathematics \& Statistics\\
      California State University\\
          Sacramento, CA 95819, USA}
\email{vincent.pigno@csus.edu}
\author{Christopher Pinner}
\address{ Department of Mathematics\\
         Kansas State University\\ 
         Manhattan, KS 66506, USA}
\email{pinner@math.ksu.edu}
\keywords{Lind-Lehmer constant, Mahler measure}
\subjclass[2010]{Primary: 11R06; Secondary: 11B83, 11C08,  11G50, 11R09, 11T22, 43A40}
\date{\today}
\newcommand{\ve}{\varepsilon}
\newcommand{\Cal}{\mathcal}
\newcommand{\tsize}{\textstyle}
\newcommand{\be}{\begin{equation}}
\newcommand{\ee}{\end{equation}}
\newcommand{\ba}{\begin{align}}
\newcommand{\ea}{\end{align}}
\begin{document}



\begin{abstract}
We show that  the minimal positive logarithmic Lind-Mahler measure for a group of the form  $G=\mathbb Z_2^r\times\mathbb Z_4^s$ with $|G|\geq 4$ is $\frac{1}{|G|} \log (|G|-1).$
We also show that for $G=\mathbb Z_2 \times \mathbb Z_{2^n}$ with $n\geq 3$ this value is $\frac{1}{|G|} \log 9.$
Previously the minimal measure was only known for $2$-groups of the form $\mathbb Z_2^k$ or $\mathbb Z_{2^k}.$
\end{abstract}

\maketitle
\newtheorem{theorem}{Theorem}[section]
\newtheorem{corollary}{Corollary}[section]
\newtheorem{lemma}{Lemma}[section]
\newtheorem{conjecture}{Conjecture}[section]

\section{Introduction}\label{secIntro}

Recall that for a polynomial $F(x_1,\ldots ,x_k)$ in $\mathbb Z [x_1,\ldots ,x_k]$, one defines the traditional logarithmic 
Mahler measure by
$$ m(F) = \int_{0}^1 \!\cdots\! \int_0^1 \log |F(e^{2\pi i x_1},\ldots ,e^{2\pi i x_k})|\,dx_1\cdots dx_k. $$
In 2005, Lind \cite{Lind} viewed $[0,1]^k$ as the group $(\mathbb R/\mathbb Z)^k$ and  generalized the Mahler measure to arbitrary compact abelian groups.
In particular, for the finite abelian  group
$$ G=\mathbb Z_{n_1}\times \cdots \times \mathbb Z_{n_k} $$ 
and $F\in\mathbb Z [x_1,\ldots ,x_k]$, we define the \textit{logarithmic Lind-Mahler measure} by
$$  m_G(F) =\frac{1}{|G|} \sum_{x_1=1}^{n_1}\cdots \sum_{x_k=1}^{n_k} \log | F(e^{2\pi i x_1/n_1},\ldots ,e^{2\pi i x_k/n_k})|.
$$
The close connection to the group determinant is explored by Vipismakul \cite{Cid1}.
Writing
$$ w_n := e^{2\pi i/n},$$
we plainly have 
$$m_G(F) =\frac{1}{|G|} \log |M_G(F)|, $$
where
$$ M_G(F) := \prod_{j_1=1}^{n_1}\cdots \prod_{j_k=1}^{n_k} F\left(w_{n_1}^{j_1},\ldots ,w_{n_k}^{j_k}\right) $$
will be in $\mathbb Z$.
Analogous to the  classical  Lehmer problem \cite{Lehmer}, we can ask for the minimal $m_G(F)>0$, and to this end we define
the \textit{Lind-Lehmer constant} for $G$ by
$$ \lambda (G) := \min\{ |M_{G}(F)|>1 \; |\; F\in \mathbb Z [x_1,\ldots ,x_k]\}. $$
We use $|M_G(F)|$ rather than $m_G(F)$ or $|M_G(F)|^{1/|G|}$ so that we are dealing with integers; of course the minimal positive logarithmic measure will be $\frac{1}{|G|} \log \lambda (G)$.
As Lind observed, for $|G|\geq 3$ we always have the trivial bound
\be\label{triv} \lambda (G)\leq |G|-1, \ee
achieved, for example, by 
$$ F(x_1,\ldots ,x_k)=-1 +  \prod_{i=1}^k \left(\frac{{x_i}^{n_i}-1}{x_i-1}\right). $$  
Lind also showed that for prime powers $p^\alpha$ with $\alpha\geq1$ we have
\be \label{powerp}
\lambda (\mathbb Z_{p^\alpha})=\begin{cases} 3, & \hbox{ if $p=2$,} \\2, & \hbox{ if $p\geq 3$,} \end{cases} \ee
achieved with $x^2+x+1$ if $p=2$ and $x+1$ if $p\geq 3$.
Lind's results for cyclic groups were extended by Kaiblinger \cite{norbert} and Pigno \& Pinner \cite{pigno} so that $\lambda (\mathbb Z_m)$ is now known if $ 892\,371\,480\nmid m$. 
The value for the $p$-group $\mathbb Z_p^k$ was recently established by De Silva \& Pinner \cite{dilum}, but little is known for direct products involving at least one term $\mathbb Z_{p^{\alpha}}$ with $\alpha\geq2$.

Here we are principally interested  in the case of $2$-groups
\be \label{2group} G=\mathbb Z_{2^{\alpha_1}}\times \cdots \times \mathbb Z_{2^{\alpha_k}}. \ee
It was shown  in \cite{dilum} that for all $k\geq 2$
\be \label{all2s} \lambda (\mathbb Z_2^k)=2^k-1, \ee
a case of equality in \eqref{triv}.
We establish two main results regarding the Lind-Lehmer constant for groups of the form \eqref{2group}.
First, we prove that equality occurs in \eqref{triv} whenever $G$ is a $2$-group whose factors are all $\mathbb Z_2$ or $\mathbb Z_4.$

\begin{theorem}\label{thm1} If $G=\mathbb{Z}_2^r$ or  $\mathbb{Z}_4^s$ or $\mathbb{Z}_2^r\times\mathbb{Z}_4^s,$ then
$$ \lambda (G) =\max\{ 3, |G|-1\}. $$
\end{theorem}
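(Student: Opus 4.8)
The plan is to match the upper bound $\lambda(G)\le|G|-1$ from \eqref{triv} (valid once $|G|\ge3$, and equal to $\max\{3,|G|-1\}$ as soon as $|G|\ge4$) with a lower bound of the same size. Several cases are already available: $\mathbb Z_2^r$ comes from \eqref{all2s} and \eqref{powerp}, and the groups in the family with $|G|\le4$, namely $\mathbb Z_2$, $\mathbb Z_2^2$ and $\mathbb Z_4$, have value $3$ by \eqref{powerp} and \eqref{all2s}. Hence it suffices to treat $G=\mathbb Z_2^r\times\mathbb Z_4^s$ with $s\ge1$ (allowing $r=0$, which covers the new groups $\mathbb Z_4^s$) and to prove that every $F$ with $|M_G(F)|>1$ satisfies $|M_G(F)|\ge|G|-1$. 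Reducing $F$ modulo the relations $x_i^2-1$ and $x_j^4-1$, I may assume $F=\sum_{\mathbf e\in E}c_{\mathbf e}x^{\mathbf e}$ with $E=\{0,1\}^r\times\{0,1,2,3\}^s$, so that $|E|=|G|=2^{r+2s}$, and I set $F_0=\sum_{\mathbf e}c_{\mathbf e}=F(1,\dots,1)$. The whole argument then splits on the parity of $F_0$.

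When $F_0$ is even I work in $\mathbb Z[i]$, where $2=-i(1+i)^2$ and $v_{1+i}(m)=2v_2(m)$ for $m\in\mathbb Z$. Each value $F(\chi)$ is obtained by substituting $\pm1$ and powers of $i$, and since every such root of unity is $\equiv1\pmod{(1+i)}$ we get $F(\chi)\equiv F_0\equiv0\pmod{(1+i)}$ for all $|G|$ characters $\chi$. Thus $v_{1+i}(M_G(F))\ge|G|$, so $v_2(M_G(F))\ge2^{r+2s-1}\ge r+2s$ and therefore $|G|\mid M_G(F)$. Consequently either $M_G(F)=0$ or $|M_G(F)|\ge|G|>|G|-1$, and this case is complete.

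The odd case rests on the following congruence, which I will call the Key Lemma: if $F_0$ is odd then $M_G(F)\equiv\pm1\pmod{|G|}$. Granting it, $M_G(F)$ is an odd integer lying in $\{\,\pm1+t|G|:t\in\mathbb Z\,\}$, so $|M_G(F)|\in\{1,|G|-1,|G|+1,\dots\}$; hence $|M_G(F)|>1$ forces $|M_G(F)|\ge|G|-1$, and the extremal polynomial of \eqref{triv}, for which $|M_G(F)|=|G|-1$, shows the bound is sharp. I note that this is exactly the statement that must fail for $\mathbb Z_2\times\mathbb Z_{2^n}$ with $n\ge3$, whose Lind-Lehmer constant is only $9$.

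It remains to prove the Key Lemma, and this is where the real difficulty lies. Writing $F(\chi)=F_0(1+\delta_\chi)$ with each $v_{1+i}(\delta_\chi)\ge1$, orthogonality of characters gives $\sum_\chi F(\chi)^j=|G|\,a_j$, where $a_j\in\mathbb Z$ is the coefficient of the identity in $F^j$; expanding $F_0^k\sum_\chi\delta_\chi^k=\sum_\chi(F(\chi)-F_0)^k$ then shows that every power sum $p_k=\sum_\chi\delta_\chi^k$ satisfies $p_k\equiv0\pmod{|G|}$. Together with $F_0^{|G|}\equiv1\pmod{|G|}$, this reduces $M_G(F)=F_0^{|G|}\prod_\chi(1+\delta_\chi)$ modulo $|G|$ to the elementary symmetric functions $e_k$ of the $\delta_\chi$; Newton's identities make the odd-index $e_k$ vanish modulo $|G|$ for free, but each division by an even index — first of all $e_2$ — costs one factor of $2$, so this argument by itself only yields $M_G(F)\equiv\pm1\pmod{|G|/2}$. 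Recovering the final factor of $2$ is the main obstacle, and it is precisely here that I must use that every character has order dividing $4$. Grouping the characters into the $2^{r+s}$ of order at most $2$ and the conjugate pairs of order $4$ gives $M_G(F)=A\cdot B$ with $A=\prod_{\mathrm{ord}\,\chi\le2}F(\chi)$ the group determinant for the $2$-torsion quotient $\mathbb Z_2^{r+s}$ and $B=\prod_{\{\chi,\bar\chi\}}|F(\chi)|^2$; one then exploits that every odd Gaussian integer has norm $\equiv1\pmod4$, so each factor of $B$ is $\equiv1\pmod4$, together with the congruence $A\equiv\pm1\pmod{2^{r+s}}$ underlying \eqref{all2s}, to locate $A$ and $B$ modulo $|G|$. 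Equivalently, I expect to run an induction on $s$, peeling off one $\mathbb Z_4$ via $M_G(F)=M_{G'}(F_0)\,M_{G'}(F_2)\,|M_{G'}(F_1)|^2$, where $F_j=F(\;\cdot\;,i^j)$ and $G=G'\times\mathbb Z_4$, strengthening the inductive hypothesis so as to pin down the sign of $M_{G'}$ modulo $2|G'|$. The extra factor of $2$ that this structure supplies — unavailable once $8$th roots of unity appear — is exactly what separates these groups from $\mathbb Z_2\times\mathbb Z_{2^n}$.
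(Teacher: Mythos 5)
Your preliminary reductions are fine: the even case of $F(1,\dots,1)$ is handled correctly and matches the paper's remark following Lemma~\ref{cong}, and the reduction of the odd case to the congruence $M_G(F)\equiv\pm1\pmod{|G|}$ is logically valid. But that ``Key Lemma'' is never proved, and it is not a lemma --- it is the entire theorem (indeed a statement strictly stronger than what the paper establishes). The congruence the paper actually proves, Lemma~\ref{cong}, gives one factor of $2$ per cyclic factor, i.e.\ $M_G(F)\equiv1\pmod{2^{r+s}}$ as in \eqref{2cong}, and the gap between $2^{r+s}$ and $|G|=2^{r+2s}$ is precisely what all of Section~\ref{secThm1} is devoted to closing. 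Your Newton's identities computation does not even reach $\pm1\pmod{|G|/2}$: you lose a factor $2^{j}$ at each $e_{2^{j}}$, so controlling $\prod_\chi(1+\delta_\chi)$ requires the gcd of $|G|/2^{j}$ over all $j$, and the extra $(1+i)$-divisibility of the $\delta_\chi$ (each has $v_2\ge 1/2$) does not repair this; already for $\mathbb Z_4^2$ the term $e_4$ is only forced to have $v_2\ge 2$, so the method returns nothing beyond $M\equiv1\pmod 4$ against a target modulus of $16$. The two repair mechanisms you sketch also fall short as stated: each conjugate-pair norm in $B$ being $\equiv1\pmod 4$ only gives $B\equiv1\pmod4$ (the moduli of the factors do not multiply), and the induction peeling off one $\mathbb Z_4$ hands you congruences modulo $|G'|=|G|/4$, so you must manufacture a factor of $4$ at every step and no source for it is identified. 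Since the congruence provably fails for $\mathbb Z_2\times\mathbb Z_{2^n}$ with $n\ge3$ (where $\lambda=9$), any proof of it must exploit the $\mathbb Z_2/\mathbb Z_4$ structure quantitatively, not just through the order of the character values.

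The paper takes a genuinely different route around this obstacle: rather than proving a congruence for $M_G(F)$ itself, it works with the individual values $F(\chi)$. Using the oddness of the constant coefficient $a(0,\dots,0)(1)$ it shows that almost all values $F(\chi)$ must equal $1$, then invokes Lemma~\ref{vanishing} repeatedly to force
$$ F = 1+\prod_{j=1}^h(x_j^2-1)\prod_{j=h+1}^s(x_j^2+1)\prod_{j=s+t}^{k}(x_j+1)\,R, $$
from which the few non-unit values $\gamma_j$ inherit congruences such as $\gamma_j\equiv1\pmod{2^{s+r+1-t}}$, where $t$ counts the non-units; the product of $t$ factors each of size at least $2^{r+s+1-t}-1$ (or its square, for conjugate pairs) then exceeds $|G|-1$. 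If you want to salvage your architecture, you would need to prove the Key Lemma by essentially this kind of value-by-value analysis anyway, at which point the symmetric-function machinery is doing no work.
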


Second, we show that this is not true for all $2$-groups: if we allow $\alpha_i\geq 3$ in \eqref{2group} then \eqref{triv} need not be sharp.

\begin{theorem}\label{thm2} For $n\geq 3$
$$ \lambda (\mathbb Z_2 \times \mathbb Z_{2^n}) = 9, $$
achieved with $F(x,y)=y^2+y+1$.
\end{theorem}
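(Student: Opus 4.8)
The plan is to reduce the computation over $G=\mathbb Z_2\times\mathbb Z_{2^n}$ to two measures over the cyclic group $\mathbb Z_{2^n}$, where \eqref{powerp} is available. Since the $x$-coordinate is evaluated only at $x=\pm1$, grouping the characters of $G$ gives
$$ M_G(F)=\prod_{k=1}^{2^n}F(1,w_{2^n}^k)\cdot\prod_{k=1}^{2^n}F(-1,w_{2^n}^k)=A\cdot B, $$
where $A:=M_{\mathbb Z_{2^n}}(F(1,y))$ and $B:=M_{\mathbb Z_{2^n}}(F(-1,y))$ are integers. Writing $F(x,y)\equiv a(y)+x\,b(y)\pmod{x^2-1}$ with $a,b\in\mathbb Z[y]$ gives $F(1,y)=a+b$, $F(-1,y)=a-b$, and in particular $F(1,y)\equiv F(-1,y)\pmod 2$. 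For the upper bound, $F(x,y)=y^2+y+1$ gives $A=B=\prod_{w^{2^n}=1}(w^2+w+1)=(\omega^{2^n}-1)(\omega^{2\cdot 2^n}-1)=3$ with $\omega=e^{2\pi i/3}$, so $M_G(F)=9$ and $\lambda(G)\le 9$.

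For the lower bound I must show that $|AB|>1$ forces $|AB|\ge 9$. By \eqref{powerp} neither $A$ nor $B$ can equal $\pm2$, so each nonzero value is $\pm1$ or has absolute value at least $3$, and an even nonzero value has absolute value at least $4$. Since $a(1)\pm b(1)$ have the same parity, $A$ and $B$ are simultaneously odd or even. If both are even, each is $0$ or at least $4$ in absolute value, so $|AB|$ is $0$ or $\ge16$; as $|AB|>1$ this gives $|AB|\ge16$. If both are odd, the only troublesome configuration is $|A|=1$ (say) with $|B|\in\{3,5,7\}$, since $|A|,|B|\ge3$ already yields $|AB|\ge9$ and $|A|=|B|=1$ gives $|AB|=1$. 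Thus everything reduces to excluding $|B|\in\{3,5,7\}$ when $A=\pm1$.

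This is handled by the key congruence, which is the heart of the argument: for $n\ge3$,
$$ g_1\equiv g_2\pmod 2 \text{ and } g_1(1)\text{ odd} \ \Longrightarrow\ M_{\mathbb Z_{2^n}}(g_1)\equiv M_{\mathbb Z_{2^n}}(g_2)\pmod{16}. $$
Granting this, the odd case gives $A\equiv B\pmod{16}$; with $A=\pm1$ this forces $B\equiv\pm1\pmod{16}$, and no integer with $3\le|B|\le7$ is $\equiv\pm1\pmod{16}$. Hence $|B|=1$ or $|B|\ge9$, completing the lower bound and giving $\lambda(G)=9$. To prove the congruence I would factor the cyclic measure through the cyclotomic norms,
$$ M_{\mathbb Z_{2^n}}(g)=g(1)\,g(-1)\prod_{j=2}^{n}N_j(g),\qquad N_j(g):=N_{\mathbb Q(\zeta_{2^j})/\mathbb Q}\big(g(\zeta_{2^j})\big), $$
all factors being odd when $g(1)$ is odd. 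The classical fact that an odd norm from $\mathbb Q(\zeta_{2^j})$ is $\equiv1\pmod{2^{j}}$ makes every factor with $j\ge4$ congruent to $1$ modulo $16$, so these are irrelevant modulo $16$ and the congruence collapses to the single case $n=3$, namely $M_{\mathbb Z_8}(g_1)\equiv M_{\mathbb Z_8}(g_2)\pmod{16}$.

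The main obstacle is precisely this residual $n=3$ statement: the individual factors $g(1),g(-1),N_2,N_3$ depend on $g$ modulo more than $2$, and only their product is insensitive to replacing $g$ by $g+2h$. I expect to settle it by an explicit computation in $\mathbb Z[\zeta_8]$, expanding
$$ \frac{M_{\mathbb Z_8}(g+2h)}{M_{\mathbb Z_8}(g)}=\prod_{w^8=1}\Big(1+\tfrac{2h(w)}{g(w)}\Big)\pmod{16}, $$
where each factor is a $2$-adic unit congruent to $1$ modulo the prime above $2$ (using that $g(1)$ odd makes $g(w)$ prime to $2$), and then checking that the terms of $2$- and $4$-adic valuation cancel. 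Finally I would note that this is genuinely an $n\ge3$ phenomenon: for $n\le2$ the available modulus drops below $16$, consistent with Theorem~\ref{thm1}, which yields the smaller value $\lambda(\mathbb Z_2\times\mathbb Z_4)=7$.
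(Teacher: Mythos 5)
Your proposal is correct, and its engine is genuinely different from the paper's. Both arguments open with the same decomposition $M_G(F)=AB$, $A=M_{\mathbb Z_{2^n}}(F(1,y))$, $B=M_{\mathbb Z_{2^n}}(F(-1,y))$, but the paper's key congruence lives at the level of the $\mathbb Z[i]$-norms: it shows $R_j(F(1,y))-R_j(F(-1,y))\in 4\mathbb Z[i]$ for $3\le j\le n$, which (once $B=\pm1$) only yields ``$N_j(F(1,y))=1$ or $N_j(F(1,y))\ge 9$''; this forces $M_G(F)=M_{\mathbb Z_2\times\mathbb Z_4}(F)$, so the paper must import Theorem~\ref{thm1} and Lemma~\ref{cong} to conclude $M_G(F)=-7$, and then run a page-long structural endgame (pinning down $F(\pm1,\pm1)$, $F(\pm1,\pm i)$, and the shape of $F$) to derive a contradiction at the $\zeta_8$ level. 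Your mod-$16$ rigidity lemma --- $g_1\equiv g_2\pmod 2$ with $g_1(1)$ odd implies $M_{\mathbb Z_{2^n}}(g_1)\equiv M_{\mathbb Z_{2^n}}(g_2)\pmod{16}$ for $n\ge 3$ --- replaces all of that at a stroke: it rules out $|B|\in\{3,5,7\}$ when $A=\pm1$, in particular the troublesome value $-7$ (note $-7\equiv 9$ and $7\equiv 15 \pmod{16}$), and it makes the proof of Theorem~\ref{thm2} independent of Theorem~\ref{thm1}, needing only Lind's result \eqref{powerp}. Moreover your deferred $n=3$ computation does close, so the plan is sound: since $\mathbb Z_2[y]/(y^8-1)$ is local with maximal ideal $(2,y-1)$, the hypothesis $g_1(1)$ odd makes $g_1$ a unit there, so $g_2=g_1(1+2k)$ with $k$ integral; the power sums $p_j=\sum_{w^8=1}k(w)^j$ all lie in $8\mathbb Z_2$ (each is $8$ times the constant coefficient of $k^j$ reduced mod $y^8-1$), and Newton's identities then give $2e_1=16k_0$, $4e_2=2(p_1^2-p_2)\equiv 0\pmod{16}$, and $8\mid e_3$, whence $\prod_w(1+2k(w))=1+2e_1+4e_2+8e_3+\cdots\equiv 1\pmod{16}$, which is exactly your claim after multiplying back by $M(g_1)$. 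The one point you must make explicit in a final write-up is where this cancellation comes from: it is \emph{not} a consequence of each factor $1+2k(w)$ being $\equiv 1$ modulo the prime above $2$ (that alone is far too weak, as independent factors could be perturbed separately); it comes from the $k(w)$ being the values at all $8$th roots of unity of a single integral element, i.e., from integrality of the symmetric functions and the divisibility $8\mid p_j$. Your reduction of the general case to $n=3$ via the classical fact that odd norms from $\mathbb Q(\zeta_{2^j})$ are $\equiv 1\pmod{2^j}$ is also correct (Frobenius/unramifiedness at odd primes, plus positivity of norms in a totally complex field). In the trade-off, the paper's route buys finer information --- it exhibits exactly what a hypothetical measure-$7$ polynomial would have to look like before refuting it --- while yours buys brevity, logical independence from Theorem~\ref{thm1}, and a clean, reusable rigidity statement for cyclic $2$-group measures.
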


Crucial to our proofs of these statements will be a congruence satisfied by $M_G(F)$ when $G$ is a $p$-group.
This is a generalization of \cite[Lemma 2.1]{dilum} (see also \cite[Theorem 2.1.2]{Cid1}).

\begin{lemma}\label{cong}  If $p$ is a prime and  
\be \label{pgroup} G=\mathbb Z_{p^{\alpha_1}}\times \cdots \times \mathbb Z_{p^{\alpha_k}},\ee
 then
$$ M_G(F) \equiv F(1,\ldots ,1)^{|G|} \mod p^k. $$

\end{lemma}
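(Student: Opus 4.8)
The plan is to pass to the ring $R=\mathbb{Z}[w_{p^\alpha}]$, where $\alpha=\max_i\alpha_i$, in which $p$ is totally ramified: its unique prime $\mathfrak{p}=(1-w_{p^\alpha})$ satisfies $(p)=\mathfrak{p}^e$ with $e=\varphi(p^\alpha)=p^{\alpha-1}(p-1)$. The key elementary observation is that a rational integer $n$ lies in $\mathfrak{p}^N$ exactly when $v_p(n)\geq\lceil N/e\rceil$, so that $\mathfrak{p}^{(k-1)e+1}\cap\mathbb{Z}=p^k\mathbb{Z}$. Since $M_G(F)$ and $F(1,\ldots,1)^{|G|}$ are both rational integers, it therefore suffices to prove the stronger-looking congruence
\[ M_G(F)\equiv F(1,\ldots,1)^{|G|}\pmod{\mathfrak{p}^{(k-1)e+1}} \]
in $R$. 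Every $p$-power root of unity lies in $R$ and is $\equiv 1\pmod{\mathfrak{p}}$, so writing $D_{\mathbf{j}}:=F(w_{p^{\alpha_1}}^{j_1},\ldots,w_{p^{\alpha_k}}^{j_k})-F(1,\ldots,1)$ we have $D_{\mathbf{j}}\in\mathfrak{p}$ for every index tuple $\mathbf{j}$.

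I would argue by induction on $k$, peeling off the last cyclic factor. Writing $G=G'\times\mathbb{Z}_{p^{\alpha_k}}$ and $F_j(\mathbf{x}):=F(\mathbf{x},w_{p^{\alpha_k}}^{j})\in R[x_1,\ldots,x_{k-1}]$, one has the factorization
\[ M_G(F)=\prod_{j=1}^{p^{\alpha_k}}M_{G'}(F_j). \]
The inductive hypothesis, applied over $R$, gives $M_{G'}(F_j)\equiv F_j(1,\ldots,1)^{|G'|}\pmod{\mathfrak{p}^{(k-2)e+1}}$ for each $j$, and the product of the right-hand sides is $\big(M_{\mathbb{Z}_{p^{\alpha_k}}}(g)\big)^{|G'|}$ with $g(y):=F(1,\ldots,1,y)$. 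The single-variable base case gives $M_{\mathbb{Z}_{p^{\alpha_k}}}(g)\equiv g(1)^{p^{\alpha_k}}\pmod{\mathfrak{p}}$, and since $\binom{p}{i}\in\mathfrak{p}^e$ for $1\leq i\leq p-1$ a lifting-the-exponent computation valid in $R$ raises this, after taking the $|G'|=p^{\alpha_1+\cdots+\alpha_{k-1}}$-th power, to $F(1,\ldots,1)^{|G|}$ modulo a high power of $\mathfrak{p}$.

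The hard part is that combining these two steps naively loses exactly one power of $p$: the inductive hypothesis only controls each factor modulo $\mathfrak{p}^{(k-2)e+1}$, so the product is pinned down only modulo the same ideal, yielding the conclusion modulo $p^{k-1}$ rather than $p^k$. Recovering the final factor of $p$ requires genuine cancellation among the $p^{\alpha_k}$ nearly-diagonal factors $M_{G'}(F_j)$ as $w_{p^{\alpha_k}}^{j}$ runs over a complete set of $p^{\alpha_k}$-th roots of unity. The mechanism I would use is orthogonality: for any $H\in R[x_1,\ldots,x_k]$ one has
\[ \sum_{\mathbf{j}}H\!\left(w_{p^{\alpha_1}}^{j_1},\ldots,w_{p^{\alpha_k}}^{j_k}\right)=|G|\,\sigma(H), \]
where $\sigma(H)$ collects the coefficients of $H$ supported on exponents $a_i\equiv 0\pmod{p^{\alpha_i}}$; in particular every such complete sum is divisible by $|G|$, and $v_{\mathfrak{p}}(|G|)=e\sum_i\alpha_i\geq ke$. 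Applied to the first-order error term in the expansion of the product, this supplies precisely the missing factor of $p$. Making this cancellation rigorous — simultaneously controlling the higher-order terms of the product expansion and the powers of $p$ hidden in the denominators when one passes from the power sums of the $D_{\mathbf{j}}$ back to $M_G(F)=\prod_{\mathbf{j}}(F(1,\ldots,1)+D_{\mathbf{j}})$ — is the delicate heart of the argument, and is where I expect the real work to lie.
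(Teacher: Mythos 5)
Your proposal is a plan rather than a proof: the step you yourself call ``the delicate heart of the argument'' --- recovering the lost factor of $p$ by rigorously extracting cancellation among the factors $M_{G'}(F_j)$ --- is precisely the content of the lemma, and it is left undone. Two concrete obstructions block the route as outlined. First, your induction on $k$ applies the lemma to $F_j(\mathbf{x})=F(\mathbf{x},w_{p^{\alpha_k}}^{j})$, whose coefficients lie in $R=\mathbb Z[w_{p^\alpha}]$ rather than $\mathbb Z$; so the inductive hypothesis you invoke is not the lemma but a strictly stronger, unproven statement about $R$-coefficient polynomials with a congruence modulo $\mathfrak p^{(k-2)e+1}$. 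Your opening reduction (that $\mathfrak p^{(k-1)e+1}\cap\mathbb Z=p^{k}\mathbb Z$) is available only at the top level: the intermediate quantities $M_{G'}(F_j)$ are not rational integers, so their $\mathfrak p$-adic congruences cannot be upgraded to $p$-adic ones, and it is not even clear the $R$-coefficient statement is true in the form you need. Second, the cancellation mechanism you sketch genuinely controls only the power sums: orthogonality applied to the integer polynomial $(F-F(1,\ldots,1))^m$ shows $\sum_{\mathbf j}D_{\mathbf j}^{m}\equiv 0 \bmod |G|$ for every $m$, but $M_G(F)=\prod_{\mathbf j}\bigl(F(1,\ldots,1)+D_{\mathbf j}\bigr)$ expands in the \emph{elementary symmetric functions} of the $D_{\mathbf j}$, and converting power sums to elementary symmetric functions (Newton's identities) divides by integers carrying many factors of $p$, while each $D_{\mathbf j}$ individually has $\mathfrak p$-valuation as small as $1$, i.e.\ only $1/e$ of one power of $p$. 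Nothing in the proposal shows these competing effects net out in your favor, and that is exactly what would have to be proved.

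It is worth seeing how the paper sidesteps both problems at once. Instead of expanding the product over the group, it absorbs the product into a single auxiliary polynomial
$$ g(x_1,\ldots ,x_k)=\prod_{l_1=1}^{p^{\alpha_1}}\cdots\prod_{l_k=1}^{p^{\alpha_k}} F\bigl(x_1^{l_1},\ldots ,x_k^{l_k}\bigr)\in\mathbb Z[x_1,\ldots ,x_k], $$
and applies orthogonality \emph{linearly} to the values of $g$: the sum $S=\sum_{\mathbf j}g(w_{p^{\alpha_1}}^{j_1},\ldots ,w_{p^{\alpha_k}}^{j_k})$ equals $a(0,\ldots ,0)\,p^{\alpha_1+\cdots+\alpha_k}$. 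The whole point of $g$ is that when every $j_i$ is coprime to $p$ its value is \emph{exactly} $M_G(F)$ --- the full product, with no expansion, hence no higher-order error terms and no denominators --- while at a degenerate tuple with $(j_i,p^{\alpha_i})=p^{t_i}$ its value is $M_{G''}(\tilde F)^{p^{t_1+\cdots+t_k}}$ for a smaller $p$-group $G''$ and an \emph{integer} polynomial $\tilde F$ obtained from $F$ by setting some variables equal to $1$. Consequently the induction (on $\alpha_1+\cdots+\alpha_k$, not on $k$) never leaves $\mathbb Z[x_1,\ldots ,x_k]$, which removes your first obstruction, and the single congruence $S\equiv 0 \bmod p^{\alpha_1+\cdots+\alpha_k}$ supplies the missing factor of $p$ without any product expansion, which removes your second. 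If you want to salvage your outline, replacing the product expansion by this auxiliary-polynomial device is the shortest repair.
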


Notice that for the $p$-group \eqref{pgroup} we have 
$$M_G(F)= \prod_{t_1=0}^{\alpha_1} \cdots \prod_{t_k=0}^{\alpha_k} N_{t_1,\ldots ,t_k}(F), $$
where 
$$N_{t_1,\ldots ,t_k}(F)= \hspace{-3ex} \prod_{\stackrel{j_1=1}{(j_1,p^{\alpha_1})=p^{t_1}}}^{p^{\alpha_1}} \hspace{-1.5ex} \cdots \hspace{-1.5ex} \prod_{\stackrel{j_k=1}{(j_k,p^{\alpha_k})=p^{t_k}}}^{p^{\alpha_k}} \hspace{-3ex} F(w_{p^{\alpha_1}}^{j_1},\ldots ,w_{p^{\alpha_k}}^{j_k})\in \mathbb Z.$$  
Since  $|1-w_{p^\alpha}^j|_p<1$ and the $N_{t_1,\ldots ,t_k}(F)$ are integers, we have 
$$N_{t_1,\ldots ,t_k}(F)\equiv F(1,\ldots ,1)^{\varphi(p^{\alpha_1-t_1})\cdots \varphi (p^{\alpha_k-t_k})} \text{ mod }p. $$
 In particular if $p\mid F(1,\ldots ,1)$ we have $p\mid N_{t_1,\ldots ,t_k}(F)$  for all $t_i$ and $|G|p^k\mid M_G(F)$. So, in view of \eqref{triv},  we can assume for the  $p$-group \eqref{pgroup}  that $p\nmid F(1,\ldots ,1)$ for any $F$ achieving $\lambda (G)$.

Thus, in the case of 2-groups we can assume an $F$ with minimal measure has  $F(1,\ldots ,1)$ odd, and by Lemma \ref{cong} we see that
\be \label{2cong}  M_G(F) \equiv 1 \text{ mod } 2^k. \ee
Note this immediately produces \eqref{all2s}.

Similarly for $3$-groups we can assume  that an $F$ with minimal measure has $3\nmid F(1,\ldots ,1)$ and $M_G(F) \equiv \pm 1 $ mod $3^k$.
This produces another case of equality in \eqref{triv}:
$$  \lambda (\mathbb Z_{3}^k)=3^k-1, $$
as observed in
\cite{dilum}.
For $G=\mathbb Z_3 \times \mathbb Z_{3^n}$, we have $M_G(F)\equiv \pm 1$ mod 9 and so we immediately obtain the minimal measure for an additional family of 3-groups.

\begin{theorem} \label{mod3}
For $n\geq 1$
$$ \lambda (\mathbb Z_3 \times \mathbb Z_{3^n}) = 8, $$
achieved with $F(x,y)=y+1$.
\end{theorem}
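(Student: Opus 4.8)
The plan is to sandwich $\lambda(G)$ between a lower bound coming from the congruence of Lemma~\ref{cong} and a matching upper bound obtained by directly evaluating $M_G(y+1)$. Throughout, write $G=\mathbb{Z}_3\times\mathbb{Z}_{3^n}$, so that $k=2$ and $|G|=3^{n+1}$.

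For the lower bound I would first use the reduction already recorded after Lemma~\ref{cong}: any $F$ achieving $\lambda(G)$ may be assumed to satisfy $3\nmid F(1,1)$, since if $3\mid F(1,1)$ then the divisibility $|G|\cdot 3^{k}\mid M_G(F)$ forces $|M_G(F)|$ to exceed the trivial bound $|G|-1$ and $F$ cannot be a minimizer. Lemma~\ref{cong} with $p=3$, $k=2$ then gives $M_G(F)\equiv F(1,1)^{|G|}\pmod 9$. Because the unit group modulo $9$ is cyclic of order $6$ and $|G|=3^{n+1}\equiv 3\pmod 6$, raising a unit to the power $|G|$ agrees with cubing it; a direct check shows every cube of a unit modulo $9$ is congruent to $\pm 1$. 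Hence $M_G(F)\equiv \pm 1\pmod 9$. The integers $N$ with $|N|>1$ and $N\equiv\pm 1\pmod 9$ of least absolute value are $N=\pm 8$, so $|M_G(F)|\geq 8$, giving $\lambda(G)\geq 8$.

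For the upper bound I would evaluate $M_G(F)$ for $F(x,y)=y+1$. Since $F$ is independent of $x$, the product over the $\mathbb{Z}_3$ coordinate contributes a cube, so $M_G(y+1)=\left(\prod_{j=1}^{3^n}(w_{3^n}^{j}+1)\right)^{3}$. Evaluating the identity $\prod_{j=1}^{N}(x-w_N^{j})=x^N-1$ at $x=-1$ with $N=3^n$ odd yields $\prod_{j=1}^{N}(1+w_N^{j})=2$, whence $M_G(y+1)=2^3=8$. As $F(1,1)=2$ is coprime to $3$, this $F$ is admissible, and combined with the lower bound it proves $\lambda(G)=8$.

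I do not expect a serious obstacle here: the congruence $M_G(F)\equiv\pm 1\pmod 9$ is essentially handed to us by Lemma~\ref{cong}, and what remains is the elementary observation that this congruence forces $|M_G(F)|\geq 8$ together with the root-of-unity evaluation. The only point needing minor care is justifying the reduction to $3\nmid F(1,1)$ for the minimizer, which follows from the divisibility noted after Lemma~\ref{cong} once one checks that $|G|\cdot 3^{k}$ exceeds $|G|-1$ for $n\geq 1$.
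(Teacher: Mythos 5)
Your proposal is correct and follows essentially the same route as the paper: the paper derives $M_G(F)\equiv\pm1\pmod 9$ from Lemma~\ref{cong} (after discarding the case $3\mid F(1,\ldots,1)$) and notes that $8$ is then the smallest admissible value, attained by $y+1$. Your write-up simply makes explicit the two computations the paper leaves to the reader, namely that $u^{3^{n+1}}\equiv u^3\equiv\pm1\pmod 9$ for units $u$ and that $M_G(y+1)=2^3=8$.
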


Section~\ref{secLemma} of this article is devoted to the proof of Lemma~\ref{cong}, Section~\ref{secThm1} establishes Theorem~\ref{thm1}, and Section~\ref{secThm2} proves Theorem~\ref{thm2}.

\section{Proof of Lemma \ref{cong}}\label{secLemma}

We proceed by induction on $\alpha_1 +\cdots +\alpha_k$. 
For $G=\mathbb Z_p$ we use that $|w_p-1|_p=p^{-1/(p-1)}<1$. Since $M_G(F)\in \mathbb Z$ and $M_G(F)\equiv F(1)^{p}$ mod $(1-w_p)$
we see that $M_G(F)\equiv F(1)^p$ mod $p$.

Set 
$$ g(x_1,\ldots ,x_k)= \prod_{l_1=1}^{p^{\alpha_1}} \cdots \prod_{l_k=1}^{p^{\alpha_k}} F(x_1^{l_1},\ldots ,x_k^{l_k}) $$
and let $I$ be the ideal in $\mathbb Z [x_1,\ldots ,x_n]$ generated by $x_1^{p^{\alpha_1}}-1,\ldots , x_k^{p^{\alpha_k}}-1.$ 
Expanding, we have 
$$ g(x_1,\ldots ,x_k)= \sum_{0\leq \ell_1 <p^{\alpha_1}} \cdots \sum_{0\leq \ell_k <p^{\alpha_k}}  a(\ell_1,\ldots ,\ell_k)x_1^{\ell_1}\cdots x_k^{\ell_k}\;\;  \text{ mod } I. $$
We set
\begin{align*}
S &:=  \sum_{j_1=1}^{p^{\alpha_1}} \cdots \sum_{j_k=1}^{p^{\alpha_k}}  g( w_{p^{\alpha_1}}^{j_1},\ldots ,w_{p^{\alpha_k}}^{j_k})  \\
&= \sum_{0\leq \ell_1 <p^{\alpha_1}} \cdots \sum_{0\leq \ell_k <p^{\alpha_k}}  a(\ell_1,\ldots ,\ell_k) \sum_{j_1=1}^{p^{\alpha_1}} \cdots \sum_{j_k=1}^{p^{\alpha_k}} w_{p^{\alpha_1}}^{j_1\ell_1}\cdots w_{p^{\alpha_k}}^{j_k\ell_k}\\
&= a(0,\ldots ,0)p^{\alpha_1 +\cdots +\alpha_k}. 
\end{align*}
If $(j_1,p^{\alpha_1})=\cdots =(j_k,p^{\alpha_k})=1$, then for these $\varphi (p^{\alpha_1})\cdots \varphi(p^{\alpha_k})$ values we have
$$   g( w_{p^{\alpha_1}}^{j_1},\ldots ,w_{p^{\alpha_k}}^{j_k}) = M_G(F). $$
Suppose that  $(j_1,p^{\alpha_1})=p^{t_1}, \ldots, (j_k,p^{\alpha_k})=p^{t_k}$  with at least one $t_j\neq 0$, and with $L\geq 0$ of the $t_i=\alpha_i$. Suppose without loss of generality that $t_i=\alpha_i$ for any $i=1,\ldots ,L$ and $t_i<\alpha_i$ for any $i=L+1,\ldots ,k$.
For these $\varphi (p^{\alpha_{L+1}-t_{L+1}})\cdots \varphi (p^{\alpha_{k}-t_{k}})$  values, applying the induction hypothesis to $G'=\mathbb Z_{p^{\alpha_{L+1}-t_{L+1}}}\times \cdots \times \mathbb Z_{p^{\alpha_k -t_k}}$,   we have
\begin{align*}    g( w_{p^{\alpha_1}}^{j_1},\ldots ,w_{p^{\alpha_k}}^{j_k}) &  =M_{G' }\left(F(1,\ldots ,1,x_{L+1},\ldots ,x_k)\right)^{p^{t_1+\cdots +t_k}} \\
 & = \left( F(1,\ldots ,1)^{p^{(\alpha_{L+1}-t_{L+1})+\cdots + (\alpha_k-t_k)}} +  hp^{k-L}\right)^{p^{t_1+\cdots +t_k}} \\
 & \equiv F(1,\ldots ,1)^{|G|} \text{ mod } p^{k-L+\alpha_1 +\cdots +\alpha_L+ t_{L+1}+\cdots +t_k}.
\end{align*}
Hence these $(p-1)^{k-L}\; p^{(\alpha_{L+1}-t_{L+1}-1)+\cdots +(\alpha_k-t_k-1)}$ terms contribute
$$ \varphi (p^{\alpha_{L+1}-t_{L+1}})\cdots \varphi (p^{\alpha_{k}-t_{k}})F(1,\ldots ,1)^{|G|}  \text{ mod } p^{\alpha_1+\cdots + \alpha_k} $$
to $S$. 
Thus
\begin{align*}  0  & \equiv \varphi(p^{\alpha_1})\cdots \varphi(p^{\alpha_k})M_G(F) +  \left(p^{\alpha_1+\cdots +\alpha_k}- \varphi(p^{\alpha_1})\cdots \varphi(p^{\alpha_k})\right) F(1,\ldots ,1)^{|G|}  \\
 & \equiv (p-1)^{k}p^{\alpha_1+\cdots +\alpha_k -k}\left(M_G(F)-F(1,\ldots ,1)^{|G|}\right)  \text{ mod } p^{\alpha_1+\cdots +\alpha_k} \end{align*}
and the statement follows.\qed

\section{Proof of Theorem \ref{thm1}}\label{secThm1}

To prove Theorem~\ref{thm1}, we require the following lemma.

\begin{lemma} \label{vanishing} Suppose that $F\in \mathbb Z[x_1,\ldots ,x_n]$, and let $I$ denote the ideal of $\mathbb Z[x_1,\ldots ,x_n]$ 
generated by $x_1^{n_1}-1,\ldots ,x_k^{n_k}-1$.  Then  $F(w_{n_1}^{j_1},\ldots ,w_{n_k}^{j_k})=0$ for 
all $1\leq j_i\leq n_i$ if and only if $F\in I$.
\end{lemma}

\begin{proof}
Plainly any $F$ in $I$ will have $F\left(w_{n_1}^{j_1},\ldots,w_{n_k}^{j_k}\right)=0$ for all $0\leq j_i < n_i$.
Conversely, suppose that $F\left(w_{n_1}^{j_1},\ldots ,w_{n_k}^{j_k}\right)=0$
for all $0\leq j_i < n_i$. Clearly any $F$ can be reduced mod $I$ to
a polynomial of degree less than $n_i$ in each $x_i$:
$$
F(x_1,\ldots ,x_k) = \sum_{t_1=0}^{n_1-1}\cdots \sum_{t_k=0}^{n_k-1} a(t_1,\ldots , t_k)x_1^{t_1}\cdots x_k^{t_k} \text{ mod } I.
$$
Since $\sum_{j_i=0}^{n_i-1} w_{n_i}^{(t_i-T_i)j_i}=0$ if
$t_i\not\equiv T_i$ mod $n_i$ (and $n_i$ otherwise) we have
$$  a(T_1,\ldots, T_k)=\frac{1}{|G|}\sum_{j_1=0}^{n_1-1}\cdots
\sum_{j_k=0}^{n_k-1} F\left(w_{n_1}^{j_1},\ldots
,w_{n_k}^{j_{k}}\right) w_{n_1}^{-T_1j_1}\cdots w_{n_k}^{-T_kj_k}.
$$
So $a(T_1,\ldots, T_k)=0$ for all $0\leq T_i<n_i$ and  $F=0$ mod
$I$.
\end{proof}

We now proceed to the proof of our first principal result.

\noindent
\begin{proof}[Proof of Theorem~\ref{thm1}]
Suppose that $G=\mathbb Z_{2^{\alpha_1}}\times \cdots \times \mathbb Z_{2^{\alpha_k}}$ with $2^{\alpha_i}=4$ for $1\leq i\leq s$ and $2^{\alpha_i}=2$ for $s+1\leq i\leq k$.
We write $r=k-s$.
In view of \eqref{powerp} and \eqref{all2s} we may assume that $k\geq 2$ and $s\geq 1$.
Suppose that $F(x_1,\ldots ,x_k)$ has 
$$1< |M_G(F)|< |G|-1=2^{k+s}-1. $$
Suppose that $F(x_1,\ldots,x_k)$ is a non-unit with at least one of the $x_j$ complex, say  $x_1=\pm i$, and set $G'=\mathbb Z_{2^{\alpha_2}}\times \cdots \times \mathbb Z_{2^{\alpha_k}}$. 
Plainly we may write 
$$ M_G(F) = AB, $$
with
$$ A:=M_{\mathbb Z_{2}\times G'}(F),\;\;\; B:= M_{G'} (F(i,x_2,\ldots,x_k)F(-i,x_2,\ldots ,x_k)). $$
From \eqref{2cong} we know  that $M_G(F)$ and  $A$, and hence $B$,  are all congruent to 1 mod $2^{k}$. Also $B$ will be of the form $|a+ib|^2$ and hence cannot be negative. Since it contains a non-unit we have $B>1$, hence $B\geq 2^k+1$. 
If $A\neq 1$ then $|A|\geq 2^k-1$ and $|M_G(F)|\geq (2^k-1)(2^k+1) = 4^k-1\geq |G|-1$, so we must have 
$A=1$.
Thus if $F(x_1,x_2,\ldots ,x_k)$ is a non-unit with $x_j=\pm i$, then we may assume $F(y_1,\ldots ,y_k)$ is a unit if $y_j=\pm 1$.
We have two possibilities:

\begin{itemize}
\item[] Case (a). There is at least one non-unit $F(x_1,\ldots ,x_k)$  with some $x_j=\pm i$.
\item[] Case (b). $F(x_1,\ldots ,x_k)$ is a unit whenever any of the $x_j=\pm i$.
\end{itemize}

With $I$ denoting the ideal generated by the $x_j^{2^{\alpha_j}}-1$, and splitting the $x_1$ dependence into even and odd exponents $p(x_1)=\alpha(x_1^2)+x_1\beta (x_1^2)$, we can write
$$  F(x_1,\ldots ,x_k) = \hspace{-4ex} \sum_{\substack{0\leq \ve_2,\ldots ,\ve_s\leq 3,\\  0\leq\ve_1, \ve_{s+1},\ldots ,\ve_k\leq 1} } \hspace{-4ex}a(\ve_1,\ve_2,\ldots ,\ve_k)(x_1^2) \; \;x_1^{\ve_1}x_2^{\ve_2}\cdots x_k^{\ve_k} \;\; \text{ mod } I.$$
Since $F(1,\ldots ,1)=\sum a(\ve_1,\ve_2,\ldots ,\ve_k)(1)$ is odd, we know that at least one of the $a(\ve_1,\ve_2,\ldots ,\ve_k)(1)$ is odd.
Replacing $F$ by $x_1^{\delta_1}\cdots x_n^{\delta_n}F$ with 
 $0\leq \delta_1,\delta_{s+1},\ldots ,\delta_k \leq 1$ and $0\leq \delta_2,\ldots ,\delta_s\leq 3$, and reducing mod $I$,
we can reshuffle the $a(\ve_1,\ldots ,\ve_k)(x_1^2)$ and assume that $a(0,\ldots ,0)(1)$ is odd. Replacing $F$ by $-F$ we can assume that $F(1,\ldots ,1)\equiv 1 $ mod 4.

\vspace{1ex}
{\bf Case (a)}. Suppose we have non-units with complex $x_j$. Reordering and taking $x_j\mapsto \pm x_1x_j$ for $2\leq j\leq s$ and $x_j\mapsto \pm x_j$ for $s<j\leq k$ as necessary, we assume that the first of these is $\gamma_1=F(i,1,\ldots ,1)$.
If (after the transformations) there are other non-units with complex entries in positions other than the first, by reordering 
and substituting  $x_j\mapsto x_jx_2$ as necessary for $j\geq 3$, we may assume that $\gamma_2=F(\pm i,i,\pm 1,\ldots ,\pm 1)$.
We repeat this $1\leq h\leq s$ times until we have $h$ non-units $\gamma_j=F(a_{j1},\ldots ,a_{jk})$ 
with $a_{jj}=i$, $a_{j\ell}=\pm i$ for $1\leq \ell <j$ and $a_{j\ell}=\pm 1$ for $h<\ell \leq k$, and $F(x_1,\ldots ,x_k)$ is a unit 
whenever  $x_{\ell}=\pm i$ with $h<\ell \leq s$ if $h<s$. 

Since the  $F(\pm 1,x_2,\ldots ,x_k)$ are all units,  with $F(1,\ldots ,1)=1$, and 
$$ a(0,\ldots ,0)(1) = \frac{2}{|G|} \hspace{-5ex} \sum_{\substack{\vspace{2pt}\\x_2,\ldots ,x_s=\pm i,\pm 1 \\ x_1,x_{s+1},\ldots ,x_k=\pm 1}} \hspace{-5ex} F(x_1,\ldots ,x_k)$$
 is odd,  plainly the $F(\pm 1,x_2,\ldots ,x_k)$  must all be 1. 
 Applying Lemma \ref{vanishing}, we may therefore assume that 
$$  F(x_1,\ldots ,x_k) = 1+(x_1^2-1) \hspace{-5ex} \sum_{\substack{0\leq \ve_2,\ldots ,\ve_s\leq 3,\\  0\leq \ve_1,\ve_{s+1},\ldots ,\ve_k\leq 1} } \hspace{-6ex}a(\ve_1,\ve_2,\ldots ,\ve_k)x_1^{\ve_1}x_2^{\ve_2}\cdots x_k^{\ve_k}. $$
Notice that the $F(\pm i, x_2,\ldots ,x_k)\in \mathbb Z[i]$ will all have odd real part and even imaginary  part. Moreover, 
 writing $u=(1-i)$  where  $u^2\mid 2$ and $x_j\equiv 1$ mod $u$ for  any $x_j=\pm 1$ or $\pm i$, the $F(\pm i, x_2,\ldots ,x_k)$  must all  be congruent  mod $u^3$ in $\mathbb Z[i]$. Since  $|u|_2=2^{-1/2}$  plainly two units $\pm 1,\pm i,$  in $\mathbb Z [i]$ cannot be congruent mod $u^3$ unless they are equal.
If $h\geq 2$ then we know that the $F(\pm i,\pm 1,x_3,\ldots ,x_k)$ will all be units and so must be all 1 or all $-1$. Replacing $F$ by $x_1^2F$ we can assume that they are all 1. Applying Lemma \ref{vanishing} we get 
$$ F(x_1,\ldots ,x_k)=1+(x_1^2-1)(x_2^2-1) \hspace{-7ex} \sum_{\substack{0\leq \ve_3,\ldots ,\ve_s\leq 3,\\  0\leq \ve_1,\ve_2,\ve_{s+1},\ldots ,\ve_k\leq 1} } \hspace{-7ex} a(\ve_1,\ve_2,\ldots ,\ve_k)x_1^{\ve_1}x_2^{\ve_2}\cdots x_k^{\ve_k}. $$
Likewise, if $h\geq 3$ we have  that $F(\pm i,\pm i,\pm 1,x_4,\ldots ,x_k)$ are all units and 1 mod 4, so these must all equal 1. Applying the lemma and repeating up to $F(\pm i,\ldots ,\pm i,\pm 1,x_{h+1},\ldots ,x_k)$, we deduce that
$$  F(x_1,\ldots ,x_k)=1+\prod_{j=1}^h (x_j^2-1) \hspace{-8ex} \sum_{\substack{0\leq \ve_{h+1},\ldots ,\ve_s\leq 3,\\  0\leq \ve_1,\ldots ,\ve_h,\ve_{s+1},\ldots ,\ve_k\leq 1} } \hspace{-8ex} a(\ve_1,\ve_2,\ldots ,\ve_k)x_1^{\ve_1}x_2^{\ve_2}\cdots x_k^{\ve_k}. $$
If $s>h$, we further have that the  $F(\pm i,\ldots ,\pm i, x_{h+2},\ldots ,x_k)$ are all units. If $h\geq 2$ they will all be 1 mod 4 
and so must all equal 1.
If $h=1$ then they are all 1 or all $-1$ and, by replacing $F$ by $x_1^2F$ if necessary, we may assume they are all $1$. 
Separating into real and imaginary parts, applying Lemma~\ref{vanishing}, then repeating for each variable, we find
$$  F(x_1,\ldots ,x_k)=1+\prod_{j=1}^h(x_j^2-1)\prod_{j=h+1}^s(x_j^2+1) \hspace{-3ex} \sum_{ 0\leq \ve_1,\ldots ,\ve_k\leq 1} \hspace{-3ex} a(\ve_1,\ve_2,\ldots ,\ve_k)x_1^{\ve_1}x_2^{\ve_2}\cdots x_k^{\ve_k}. $$

Suppose that there are $t\geq 1$  conjugate pairs of non-units $F(a_{j1},\ldots ,a_{jk})=\gamma_j$.
Then plainly
\be
\label{congL} \gamma_j=a_j+ib_j,\;\;\; a_j\equiv 1 \text{ mod } 2^s,\;\; b_j\equiv 0 \text{ mod } 2^s. \ee
Trivially we have $|\gamma_j|^2\geq 5$, and if $t\geq r+s$ then
$$  |M_G(F)|  \geq 5^t \geq 5^r\cdot5^s > 2^r\cdot4^s - 1,$$
so we can assume that 
\be \label{ineq1} t\leq r+s-1. \ee
If $t\leq r$ then, by using the transformation $x_{\ell}\mapsto x_{\ell}x_j$  if $x_j=-1$ to remove $x_{\ell}=-1$ with $\ell >j,$ we can 
assume that the $r$-tuples $(x_{s+1},\ldots ,x_k )$  achieving the $\gamma_j$ take the form 
$$(1,\ldots ,1), (\pm 1,1,\ldots ,1), (\pm 1,\pm 1,1,\ldots ,1), \ldots , (\underbrace{\pm 1,\ldots ,\pm 1}_{t -1},1,\ldots ,1). $$
In particular, $F(x_1,\ldots ,x_k)$ will be a unit if $x_j=-1$ for any $s+t\leq j\leq k$.
(If $s\geq 2$ the units  will all be 1; if $s=1$ we may need to take $x_1^2F$ to make the value when  $x_{s+t}=-1$ and hence the rest equal 1.)
Successively applying the lemma again, we find 
$$ F(x_1,\ldots ,x_k)=1+\prod_{j=1}^h(x_j^2-1)\prod_{j=h+1}^s(x_j^2+1) \prod_{j=s+t}^{k} (x_j+1) R $$
with
$$ R= \hspace{-3ex} \sum_{ 0\leq \ve_1,\ldots ,\ve_{s+t-1}\leq 1} \hspace{-4ex} a(\ve_1,\ve_2,\ldots ,\ve_{s+t-1})x_1^{\ve_1}x_2^{\ve_2}\cdots x_{s+t-1}^{\ve_{s+t-1}}. $$
Hence we obtain that
$$ \gamma_j=a_j+ib_j,\;\;\; a_j\equiv 1 \text{ mod  } 2^{s+r+1-t},\;\; b_j\equiv 0 \text{ mod } 2^{s+r+1-t}. $$
 From \eqref{ineq1} and \eqref{congL} this is plainly also valid if $t>r$. 
Thus, we have 
$$ |  M_G(F)| =|\gamma_1|\cdots |\gamma_t|\geq (2^{r+s+1-t}-1)^{2t} > 2^{2t(r+s+.5 -t)} \geq 2^{2(r+s-.5)} \geq 2^{r+2s} $$
for $r\geq 1$.
If $r=0$ and $t\geq 2$ then we have $s\geq 2$, and from \eqref{congL} we obtain
$$ |M_G(F)|\geq (2^s-1)^{2t}> 2^{2t(s-0.5)} \geq 2^{4s-2}\geq 4^s. $$
Finally if $t=1$ and $r=0$ then, since $F(i,1,\ldots 1)$ and its conjugate are the only non-units, we know that 
$F(\pm i,-1,x_3,\ldots ,x_k)$ are all units and so equal 1.
Hence we can add an extra factor $(x_2+1)$ to get
$$ |M_G(F)|\geq (2^{s+1}-1)^2> 2^{2s}. $$

\vspace{1ex}
{\bf Case (b)}.  Since $a(0,\ldots ,0)(1)$ is odd, we know that $a(0,\ldots ,0)(-1)$ is odd.
Since the  $F(\pm i,x_2,\ldots ,x_k)$ are all units  and 
$$ a(0,\ldots ,0)(-1) = \frac{1}{|G|/2} \hspace{-4ex} \sum_{\substack{x_1=\pm i \\ x_2,\ldots ,x_s=\pm i,\pm 1 \\ x_{s+1},\ldots ,x_k=\pm 1}} \hspace{-4ex} F(x_1,\ldots ,x_k)$$
 is odd,  plainly the $F(\pm i,x_2,\ldots ,x_k)$  must all be 1 or all be $-1$. Replacing $F$ by $x_1^2F$ we assume $F(\pm i,x_2, \ldots,x_k)=1.$
Applying Lemma \ref{vanishing} to the real and imaginary parts we can assume that
$$ F(x_1,\ldots ,x_k)=1+(x_1^2+1) \hspace{-5ex} \sum_{\substack{0\leq \ve_2,\ldots ,\ve_s\leq 3,\\  0\leq \ve_1,\ve_{s+1},\ldots ,\ve_k\leq 1} } \hspace{-5ex} a(\ve_1,\ve_2,\ldots ,\ve_k)x_1^{\ve_1}x_2^{\ve_2}\cdots x_k^{\ve_k}. $$
Notice that all the $F(\pm 1,x_2,\ldots ,x_k)\equiv F(1,\ldots ,1)\equiv 1$ mod $u^3$. Hence if $s>1$ the 
units $F(\pm 1,\pm i,x_3,\ldots ,x_k)$ are all 1.
Applying the Lemma and repeating we obtain
$$ F(x_1,\ldots ,x_k)=1+\prod_{j=1}^s (x_j^2+1) \hspace{-3ex} \sum_{0\leq \ve_1,\ldots ,\ve_k\leq 1} \hspace{-3ex} a(\ve_1,\ve_2,\ldots ,\ve_k)x_1^{\ve_1} x_2^{\ve_2}\cdots x_k^{\ve_k}. $$
Hence we have 
$$ M_G(F) = M_{\mathbb Z_2^k} (f) $$
where 
$$ f(x_1,\ldots ,x_k)=1+ 2^s \hspace{-3ex} \sum_{0\leq \ve_1,\ldots ,\ve_k\leq 1} \hspace{-3ex} A(\ve_1,\ldots ,\ve_k) x_1^{\ve_1}\cdots x_k^{\ve_k}.  $$ 
Suppose that there are $t$ elements  $f(\pm 1,\ldots ,\pm 1)$ that are not $\pm 1$. 
If $t\geq k+s-1$ then plainly $|M_G(F)|\geq 3^t \geq 3^{k+s-1}> 2^{k+s}$ since $k+s\geq 3$, so we assume that $t\leq k+s-2$.
Sending $x_j\mapsto -x_j$ we
assume that one of them is $f(1,\ldots,1)=\gamma_1$. If $t>1$ then, reordering and mapping $x_{\ell}\mapsto x_{\ell}x_{j}$ if we have $\ell > j$ with $x_{\ell}=x_j=-1$, we can assume that the remaining values are
$\gamma_2=f(-1,1,\ldots ,1), \gamma_3=f(a_{31},a_{32},1,\ldots ,1), \ldots ,\gamma_t=f(a_{t1},\ldots a_{t(t-1)},1,\ldots ,1)$. If 
$t\leq k$ then we will have $f(x_1,\ldots, x_k)=1$ whenever $x_j=-1$ for some $t\leq j \leq k$, and applying the lemma we find
$$ f(x_1,\ldots ,x_k)=1+ 2^s\prod_{j=t}^{k} (x_j+1) \hspace{-4ex} \sum_{0\leq \ve_1,\ldots ,\ve_{t-1}\leq 1} \hspace{-4ex} A(\ve_1,\ldots ,\ve_{t-1}) x_1^{\ve_1}\cdots x_{t-1}^{\ve_{t-1}}.  $$ 
Thus the
$$ \gamma_j \equiv 1 \text{ mod } 2^{s+k-t+1} $$
(with this trivially holding if $k\leq t-1$), and
$$ |M_G(F)|\geq (2^{s+k+1-t}-1)^t.  $$
For $t=1$ this gives
$$  |M_G(F)|\geq 2^{s+k}-1 =|G|-1, $$ 
and for $t\geq 2$ 
\[
|M_G(F)| \geq 2^{t(s+k+0.5-t)} \geq  2^{2s+2k-3} \geq 2^{s+k}.\qedhere
\]

\end{proof}

\section{Proof of Theorem \ref{thm2}}\label{secThm2}

Using $\Phi_j(x)$ to denote the $j$th cyclotomic polynomial and recalling (see \cite{Apostol} or \cite{ELehmer}) that for $j>k$ the resultant
$| \text{Res}(\Phi_j,\Phi_k)|=q^{\varphi(k)}$ if $j=kq^{\alpha}$ for some prime $q$ and 1 otherwise, we see that
$$ M_{\mathbb Z_2\times \mathbb Z_{2^n}}(1+y+y^2) =M_{\mathbb Z_{2^n}}(\Phi_3(y))^2= \left(\prod_{j=0}^n \left|\text{Res}(\Phi_3,\Phi_{2^j})\right|\right)^2=9. $$ 
Let $G=\mathbb Z_2 \times \mathbb Z_{2^n}$.
Reducing mod $x^2-1$, we can write our $F(x,y)$ in $\mathbb Z[x,y]$ in the form
$$ F(x,y)=A_0(y^2) + xA_1(y^2)+yA_2(y^2)+xyA_3(y^2). $$ 
Plainly,
$$ M_G(F(x,y))=M_{\mathbb Z_{2^n}} (F(1,y)) M_{\mathbb Z_{2^n}} (F(-1,y)), $$
where each of these measures is a product of $n+1$ integers,
$$M_{\mathbb Z_{2^n}} (f(y))=\prod_{j=0}^n N_j(f), \;\;\; N_j(f) :=\text{Res}(f,\Phi_{2^j}),$$ 
that is,
$$  N_0(f)=f(1),\;\;\; N_1(f)=f(-1),\;\;N_2(f)=f(i)f(-i)=|f(i)|^2, $$
and, writing $w_j:=e^{2\pi i/2^j},$  for any $j=3,\ldots,n$, we have
$$ N_j(f)= \prod_{\stackrel{k=1}{k \text{ odd}}}^{2^j}  f(w_{j}^k)= \prod_{\stackrel{k=1}{k \text{ odd}}}^{2^{j-1}}  f(w_{j}^k)f(-w_j^k)=  \left|R_j(f)\right|^2,$$ 
where
$$  R_j(f):= \hspace{-2ex} \prod_{\stackrel{k=1}{k \equiv 1 \text{ mod }4}}^{2^{j-1}} \hspace{-2ex} f(w_{j}^k)f(-w_j^k) \in \mathbb Z [i],\;\; \;\;3\leq j\leq n. $$
Note $N_j(f)$ and  $R_j(f)$ represent the norms of $f(w_j^k)$ from $\mathbb Q(w_j)$ to $\mathbb Q$ and $\mathbb Q(i)$ respectively, and since they are algebraic integers they will be in $\mathbb Z$ and  $\mathbb Z [i]$, respectively.

Since $|1-w_j|_2=2^{-1/\varphi(2^j)}$, each $N_j(F(\pm 1,y))\equiv F(1,1)^{2^{j-1}}$ mod 2, and if $M_G(F)< 2^{2n+2}$ we can assume $F(1,1)$ and all the $N_j(F(\pm 1,y))$ are odd. Note that for all the $j\geq 2$ we have $N_j(F(\pm 1,y))=|a+ib|^2=a^2+b^2\equiv 1$ mod 4.

 If $|M_G(F)|<9$ then 
$|M_{\mathbb Z_{2^n}}(F(1,y))|$ or $|M_{\mathbb Z_{2^n}}(F(-1,y))|$ must be 1. Replacing $x\mapsto -x$   as necessary we assume that 
$$ 1<| M_{\mathbb Z_{2^n}}(F(1,y))|<9,\;\;\; |M_{\mathbb Z_{2^n}}(F(-1,y))|=1. $$

Since 
$$F(1,1)=A_0(1)+A_1(1)+A_2(1)+A_3(1)$$ 
is odd, we can assume that at least one of the $A_i(1)$ is odd. Replacing $F$ by $xF$ or $yF$ or $xyF$ and reducing by $x^2-1$ as necessary, we may assume that $A_0(1)$ is odd. Replacing $y$ by $-y$ and $F$ by $-F$ as necessary, we may further assume that $|F(1,1)|\geq| F(1,-1)|$ and $F(1,1)>0$. 

Since 
\begin{align*}
 F(1,-1) & =A_0(1)+A_1(1)-A_2(1)-A_3(1), \\
F(-1,1)& =A_0(1)-A_1(1)+A_2(1)-A_3(1), \\
F(-1,-1)& =A_0(1)-A_1(1)-A_2(1)+A_3(1), \end{align*}
we have
\begin{align*}A_0(1) & =\frac{1}{4} (F(1,1)+F(1,-1)+F(-1,1)+F(-1,-1)),\\
 A_1(1) & =\frac{1}{4}(F(1,1)+F(1,-1)-F(-1,1)-F(-1,-1)), \\
A_2(1) & =\frac{1}{4}(F(1,1)-F(1,-1)+F(-1,1)-F(-1,-1)), \\
A_3(1) & =\frac{1}{4}(F(1,1)-F(1,-1)-F(-1,1)+F(-1,-1)).
\end{align*}
Observe that
\[
F(1,w_j^k)F(1,-w_j^k) = \left(A_0(w_j^{2k})+A_1(w_j^{2k})\right)^2-w_j^{2k}\left(A_2(w_j^{2k})+A_3(w_j^{2k})\right)^2
\]
and
\[
F(-1,w_j^k)F(-1,-w_j^k) = \left(A_0(w_j^{2k})-A_1(w_j^{2k})\right)^2-w_j^{2k}\left(A_2(w_j^{2k})-A_3(w_j^{2k})\right)^2
\]
differ by
$$ 4\left(A_0(w_j^{2k})A_1(w_j^{2k})-w_j^{2k}A_2(w_j^{2k})A_3(w_j^{2k}) \right)\in 4\mathbb Z [w_{j-1}].$$
Hence $ R_j(F(1,y))$ and $R_j(F(-1,y))$ differ by an element of $ 4\mathbb Z [w_{j-1}]$ and, since both are in $\mathbb Z [i]$, we conclude that
$$ R_j(F(1,y))-R_j(F(-1,y)) \in 4\mathbb Z[i]. $$
Since $N_j(F(-1,y))=1$, we have $R_j(F(-1,y))=\pm 1$ or $\pm i$, and either $R_j(F(1,y))=R_j(F(-1,y))$ and $N_j(F(1,y))=1$, or $N_j(F(1,y))\geq (4-1)^2=9$.

Thus if $|M_G(F)|<9$ then we must have $N_j(F(1,y))=N_j(F(-1,y))=1$ for $j=3,\ldots ,n$ and
$ M_G(F)=M_{\mathbb Z_2 \times \mathbb Z_4}(F). $
By Theorem \ref{thm1} and Lemma \ref{cong}, we have $|M_{\mathbb Z_2 \times \mathbb Z_4}(F)|\geq 7$ and  $M_{\mathbb Z_2 \times \mathbb Z_4}(F)\equiv 1$ mod 4,  and so 
$$ M_G(F)=M_{\mathbb Z_2 \times \mathbb Z_4}(F)=-7. $$
Since $N_j(f)\equiv 1$ mod 4 for $j\geq 2$ we must have $|F(1,1)F(1,-1)|=7$ and $N_2(F(1,y))=1$  and
$$ F(1,1)=7,\;\;  F(1,-1),F(-1,\pm 1)=\pm 1,\;\; F(\pm 1,\pm i)=\pm 1 \hbox{ or } \pm i, $$ 
with $ R_{j}(F(1,y))=R_j(F(-1,y))=\pm 1 \hbox{ or } \pm i$  for $j=3,\ldots ,n$.

We have
$$ A_0(1) =\frac{1}{4} (F(1,1)+F(1,-1)+F(-1,1)+F(-1,-1))=\frac{1}{4} (7\pm 1\pm 1\pm 1) $$
and, since $A_0(1)$ is odd,  we   must have $F(1,-1)=F(-1,\pm 1)=-1$ and $A_0(1)=1$ and $A_1(1)=A_2(1)=A_3(1)=2$.
Hence
$$ F(x,y) = 1+2x+2y+2xy +(y^2-1)(B_0(y^2) + xB_1(y^2)+yB_2(y^2)+xyB_3(y^2)). $$
Thus
\begin{align*}
F(1,i)  & =  3+4i -2(B_0(-1)+B_1(-1)+ iB_2(-1)+iB_3(-1)),     \\
F(-1,i)  & = -1 -2(B_0(-1)-B_1(-1)+ iB_2(-1)-iB_3(-1)),
\end{align*}
and since $F(\pm 1,i)$ are units with odd real part and difference  in $4\mathbb Z [i]$ they must be both be 1 or $-1$.
By replacing $F$ by $y^2F$ as necessary, we may assume $F(\pm 1,i)=-1$.
Solving, we obtain $B_0(-1)=B_{1}(-1)=B_2(-1)=B_3(-1)=1$ and
$$ F(x,y)= -1 +  (1+x)(1+y)(1+y^2) + (y^4-1)\left(   C_0(y^2) + xC_1(y^2)+yC_2(y^2)+xyC_3(y^2)\right).  $$
Therefore
\[
F(1,w_3)F(1,-w_3) = (1+2i-2C_0(i)-2C_1(i))^2 -4i(1+i-C_2(i)-C_3(i))^2
\]
and
\[
F(-1,w_3)F(-1,-w_3) = (-1-2C_0(i)+2C_1(i))^2 -4i(C_2(i)-C_3(i))^2.
\]
Since both are units and are members of $1+4\mathbb Z[i]$, these must both equal 1.
However, their difference
$$ 4\Big( (i-2C_0(i))(1+i-2C_1(i))-i(1+i-2C_3(i))(1+i-2C_2(i)) \Big) \in 4(1+i +2\mathbb Z [i]) $$
is not zero.\qed

\end{document}